\newcommand\br[1]{\left(#1\right)}
\newcommand\fbr[1]{\left\{#1\right\}}
\renewcommand{\leq}{\leqslant}
\renewcommand{\geq}{\geqslant}
\newtheorem{lemma}{Lemma}
\newtheorem{theorem}{Theorem}
\newtheorem{definition}{Definition}
\newtheorem{remark}{Remark}
\title{Adaptive Variant of Frank-Wolfe Method for Relative Smooth Convex Optimization Problems}
\author{A. A. Vyguzov, F. S. Stonyakin}
\date{}
\begin{document}
\maketitle

\begin{abstract}
This paper proposes a new variant of the adaptive Frank-Wolfe algorithm for relatively smooth convex minimization problems. It suggests using a divergence different from half of the squared Euclidean norm in the step size adjustment formula. Convergence rate estimates for this method are proven for minimization problems involving relatively smooth convex functions with the triangle scaling property. We also conducted computational experiments for the Poisson linear inverse problem and SVM models. The paper also identified the conditions under which the proposed algorithm shows a clear advantage over the adaptive proximal gradient Bregman method and its accelerated variants.
\end{abstract}

\section{Introduction}

Relatively smooth functions were introduced a few years ago in \cite{rel_smooth, lu2018relatively} to generalize the class of L-smooth problems, for which complexity estimates of gradient-type methods can be proven in high-dimensional spaces. The importance of studying such functions is explained by the significant extension of the class of problems for which we can apply widely known first-order minimization methods: distributed optimization problems with centralized and decentralized networks (see \cite{Fioretto_2018}), a lot of machine learning algorithms such as support vector machine (SVM) method for binary classification with a non-smooth loss function, Poisson linear regression for signal recognition in noise, etc.

This is what already has been studied for this class of problems
\begin{itemize}
	\item convergence rate results for the non-accelerated proximal method are known to be $O(1/k)$ (a general overview can be found in \cite{rel_smooth_review}),
	\item in \cite{rel_smooth_acc}, $O(1/k^2)$ estimates were obtained, but only for a specific subclass of problems with the triangle scaling condition (TSE), see \eqref{triangle_scaling_property} for details.
\end{itemize}
In our work, we aim
\begin{itemize}
	\item to investigate the Frank-Wolfe algorithm for relatively smooth functions,
	\item to prove its linear convergence rate,
	\item to demonstrate that in practice it can perform more efficiently than its analogs from \cite{rel_smooth_acc}.
\end{itemize}
The study of the Frank-Wolfe method is also of great interest because it is much simpler to implement than accelerated gradient methods, requiring less computational power, which is, of course, a critical requirement in real-world practice.

Let us introduce some notations that will be used frequently in this work:
\begin{itemize}
	\item $V(x, y)$ - Bregman divergence at points $x$ and $y$: $V(x, y) = h(x) - h(y) - \langle \nabla h(y), x - y \rangle$ where $h$ is a convex function (not necessarily strongly convex).
	\item $x^*$, $f^*$ - a minimum point and the value of the function $f$ at $x^*$.
	\item $d_k$ - descent direction $d_k = s_k - x_k$ of the Frank-Wolfe method, where $s_k \in LMO_Q(\nabla f(x_k))$ and $LMO_Q(g) = \text{argmin}_{z \in Q} \ g^\top z$  (LMO - linear minimization oracle),  $Q$ is a convex and compact subset of $\mathbb{R}^n$.
\end{itemize}

We propose an adaptive Frank-Wolfe algorithm for the minimization of relatively smooth functions. To do this, let us recall the definition:

\begin{definition}
Function $f$ is called a relatively smooth, if it is differentiable on $Q$ and the following inequality holds
\begin{equation}\label{rel_smooth}
    f (x) \leq f (y) + \langle \nabla f (y), x - y\rangle + LV (x, y)
\end{equation}
for all $x, y \in Q$.
\end{definition}

Relatively smooth functions were first introduced in \cite{rel_smooth}.

We emphasize that relative smoothness is a generalization of ordinary smoothness with the following reference function: $h(x) = \frac{1}{2} \|x\|_2^2$ (Euclidean distance). For instance, the Poisson inverse problem, D-optimal design, the quadratic function $x^4$, and many other functions are not smooth functions over the entire space in the classical sense, but they are relatively smooth with respect to a certain reference function.

Similarly, the empirical risk minimization (ERM) problem (see, for example, \cite{distr_min}) is smooth in the usual sense. However, if there is a centralized architecture and similarity of functions, and the central node only knows 'its own' function, it is more advantageous to switch to a non-Euclidean divergence and consider their relative analogs instead of smoothness and strong convexity. In this way, we improve the conditioning of the function.

Now that we have established the usefulness of this class of functions, let us recall the concept of the triangle scaling property from \cite{rel_smooth_acc} (Definition 2). It is one of the key properties in proving the final convergence rate of our algorithm.

\begin{definition}
	Let $f$ be a convex relatively smooth function. We say that the Bregman divergence $V$ possesses the triangle scaling property if there exists constant $\gamma > 0$ such that for all $x, z, \Tilde{z} \in \text{rint} \ \text{dom} \ h$,
	\begin{equation}\label{triangle_scaling_property}
		V((1 - \theta)x + \theta z, (1 - \theta)x + \theta \Tilde{z}) \leq \theta^\gamma V(z, \Tilde{z}), \ \forall \theta \in [0, 1],
	\end{equation}
	where $\gamma$ is called the uniform triangle scaling exponent (TSE).
\end{definition}

\section{Algorithm}
Further, we follow the approach presented in \cite{aivazian2023adaptive}, where an adaptive variant of the Frank-Wolf algorithm for smooth Lipschitz continuous functions was proposed, and introduce its analog for a class of relatively smooth convex functions.

For $1 < \gamma \leq 2$ and $x_{k+1} = x_k + \alpha_k d_k$, define the following step size:
\begin{equation}\label{step_length1}
	\alpha_k := \min \fbr{ \left( \frac{- \langle \nabla f(x_k), d_k \rangle }{2 L_k V(x_k + d_k, x_k)} \right) ^{\frac{1}{\gamma - 1}}, 1 },
\end{equation}

Now let us introduce Algorithm \ref{alg:adapt_fw_rel} with step \eqref{step_length1}.

\begin{algorithm}[!ht]
\caption{Adaptive Frank-Wolfe algorithm with the Bregman divergence.}\label{alg:adapt_fw_rel}
\begin{algorithmic}[1]
   \REQUIRE $N$, $x_{0} \in Q, L_{-1}>0$.
   \FOR{$k=0, 1, \ldots, N-1$}
    \STATE $L_{k}=\frac{L_{k-1}}{2}$.
    \STATE $s_k = \text{argmin}_{x \in Q}\left\{ \nabla f(x_k)^\top x \right\}$.
    \STATE $d_k = s_k - x_k$.
    \STATE $\alpha_k = \min \left \{ \left( \frac{-\nabla f(x_k)^\top d_k}{2 L_k V(s_k, x_k)}\right)^{1 / (\gamma - 1)}, 1 \right \} $.
    \IF{$f(x_k + \alpha_k d_k) \leq f(x_k) + \alpha_k \nabla f(x_k)^\top d_k + \alpha_k^\gamma L_k V(s_k, x_k)$}
    \STATE $x_{k+1} = x_k + \alpha_k d_k$.
    \ELSE
    \STATE $L_{k}=2 L_{k}\text{ go to line ~5}.$
    \ENDIF
 \ENDFOR
\ENSURE $x_N$.
\end{algorithmic}
\end{algorithm}

Let us prove two auxiliary lemmas.

\begin{lemma}
	Let $f$ satisfy \eqref{rel_smooth}, let $V$ satisfy \eqref{triangle_scaling_property} with $1 < \gamma \leq 2$; then
	\begin{equation}\label{descent_tse}
		f(x_k + \alpha_k d_k) \leq f(x_k) +  \alpha_k \nabla f(x_k)^\top d_k + \alpha_k^\gamma L_k V(x_k + d_k, x_k).
	\end{equation}
\end{lemma}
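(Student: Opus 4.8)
The plan is to obtain \eqref{descent_tse} by combining relative smoothness \eqref{rel_smooth} with the triangle scaling property \eqref{triangle_scaling_property}, so that the whole argument reduces to a single estimate on a Bregman divergence. First I would apply \eqref{rel_smooth} at the pair $x = x_k + \alpha_k d_k$ and $y = x_k$, which immediately gives
\[
    f(x_k + \alpha_k d_k) \leq f(x_k) + \alpha_k \langle \nabla f(x_k), d_k\rangle + L\, V(x_k + \alpha_k d_k, x_k).
\]
This already produces the correct first two terms on the right-hand side, so the only quantity left to control is the divergence $V(x_k + \alpha_k d_k, x_k)$, and comparison with the target shows it should be replaced by $\alpha_k^\gamma V(x_k + d_k, x_k)$.

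The key observation is that both arguments of this divergence are convex combinations of $x_k$ with one and the same weight $\alpha_k$: since $d_k = s_k - x_k$, we have $x_k + \alpha_k d_k = (1-\alpha_k)x_k + \alpha_k s_k$, while trivially $x_k = (1-\alpha_k)x_k + \alpha_k x_k$. Taking $\theta = \alpha_k \in [0,1]$, $x = x_k$, $z = s_k$ and $\tilde z = x_k$ in \eqref{triangle_scaling_property} then yields
\[
    V(x_k + \alpha_k d_k, x_k) = D_h\!\br{(1-\alpha_k)x_k + \alpha_k s_k,\ (1-\alpha_k)x_k + \alpha_k x_k} \leq \alpha_k^\gamma D_h(s_k, x_k),
\]
and $s_k = x_k + d_k$ identifies the last term with $\alpha_k^\gamma V(x_k + d_k, x_k)$. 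Substituting this bound back into the relative-smoothness inequality produces exactly \eqref{descent_tse}, but with $L$ in place of $L_k$.

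To reach the stated form, I would note that the Bregman divergence is nonnegative, so enlarging the coefficient from $L$ to any $L_k \geq L$ only weakens the right-hand side and preserves the inequality; this is also precisely the reason the backtracking loop in Algorithm \ref{adaptive_FW_alg} terminates, since the acceptance test (whose right-hand side uses $V(s_k,x_k) = V(x_k+d_k,x_k)$) must hold as soon as $L_k \geq L$.

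I expect the only genuine subtlety to be the bookkeeping in the second step: correctly matching $x_k + \alpha_k d_k$ and $x_k$ to the two combinations $(1-\theta)x + \theta z$ and $(1-\theta)x + \theta\tilde z$ so that \eqref{triangle_scaling_property} applies verbatim. Once that matching is set up, everything else is a direct substitution. It is worth stating explicitly where the hypotheses enter: $\alpha_k \in [0,1]$ guarantees $\theta = \alpha_k$ lies in the admissible range of the triangle scaling property, and the condition $1 < \gamma \leq 2$ is exactly the regime in which the exponent $\frac{1}{\gamma-1}$ in the step rule \eqref{step_length1} is well defined, so that the resulting $\alpha_k^\gamma$ factor matches the one appearing in the acceptance criterion.
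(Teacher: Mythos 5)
Your proposal is correct and takes essentially the same route as the paper: both proofs rewrite $V(x_k+\alpha_k d_k,\,x_k)$ as $V\br{(1-\alpha_k)x_k+\alpha_k s_k,\ (1-\alpha_k)x_k+\alpha_k x_k}$, apply \eqref{triangle_scaling_property} with $\theta=\alpha_k$, $z=s_k$, $\tilde z=x_k$ to get the bound $\alpha_k^\gamma V(x_k+d_k,x_k)$, and substitute into \eqref{rel_smooth}. Your explicit remark about passing from $L$ to $L_k$ (requiring $L_k\geq L$, or acceptance by the backtracking test) is a detail the paper's proof glosses over, but it does not change the argument.
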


\begin{proof}
	Using \eqref{triangle_scaling_property} we obtain
	\begin{gather*}
		V(x_k + \alpha_k d_k, x_k) = V((1 - \alpha_k)x_k + \alpha_k (x_k + d_k), (1 - \alpha_k)x_k + \alpha_k x_k) \leq\\
		\leq \alpha_k^\gamma V(x_k + d_k, x_k) = \alpha_k^\gamma V(s_k, x_k).
	\end{gather*}
	This completes the proof of the lemma.
\end{proof}

\begin{lemma}
	Let the step size satisfy \eqref{step_length1}, let Bregman divergence satisfy \eqref{triangle_scaling_property} with $1 < \gamma \leq 2$, then we have:
	\begin{equation}\label{descent_alpha_eq_1}
		f(x_{k+1}) - f^* \leq  \frac{f(x_k) - f^*}{2},
	\end{equation}
	if $\alpha_k = 1$, and
	\begin{equation}\label{descent_alpha_less_1}
		f(x_{k+1}) - f(x_k) \leq  - \frac{1}{2} \frac{(-\langle \nabla f(x_k), d_k \rangle)^{\gamma / (\gamma - 1)}} {(2L_k V(x_k + d_k, x_k))^{1 / (\gamma - 1)}},
	\end{equation}
	if $\alpha_k < 1.$
\end{lemma}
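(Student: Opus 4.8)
The plan is to derive both inequalities directly from the descent inequality \eqref{descent_tse} proved in the previous lemma, splitting into the two cases dictated by the $\min$ in the step rule \eqref{step_length1}. Throughout I would abbreviate $g_k := -\langle \nabla f(x_k), d_k\rangle$ and $W_k := V(x_k + d_k, x_k) = V(s_k, x_k)$, and let $x^*$ be a solution, so $f^* = f(x^*)$. First note $g_k \geq 0$ and $W_k \geq 0$: since $s_k \in LMO_C(\nabla f(x_k))$ minimizes the linear functional over $Q$ and $x_k \in Q$, we get $\nabla f(x_k)^\top s_k \leq \nabla f(x_k)^\top x_k$, i.e. $\langle \nabla f(x_k), d_k\rangle \leq 0$; and $W_k \geq 0$ by nonnegativity of the Bregman divergence. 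With this notation \eqref{descent_tse} reads $f(x_{k+1}) - f(x_k) \leq -\alpha_k g_k + \alpha_k^\gamma L_k W_k$.

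For the case $\alpha_k = 1$ the $\min$ is attained at $1$, which forces $\br{g_k/(2L_k W_k)}^{1/(\gamma-1)} \geq 1$, i.e. $g_k \geq 2L_k W_k$, equivalently $L_k W_k \leq g_k/2$. Substituting $\alpha_k = 1$ into the descent inequality gives $f(x_{k+1}) \leq f(x_k) - g_k + L_k W_k \leq f(x_k) - g_k/2$. It then remains to bound $g_k$ below by the optimality gap: by convexity $f^* \geq f(x_k) + \nabla f(x_k)^\top(x^* - x_k)$, and since $x^* \in Q$ while $s_k$ minimizes $\nabla f(x_k)^\top x$ over $Q$, we have $\nabla f(x_k)^\top(s_k - x_k) \leq \nabla f(x_k)^\top(x^* - x_k) \leq f^* - f(x_k)$, hence $g_k \geq f(x_k) - f^*$. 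Combining, $f(x_{k+1}) \leq f(x_k) - \tfrac12\br{f(x_k) - f^*}$, and subtracting $f^*$ yields \eqref{descent_alpha_eq_1}.

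For the case $\alpha_k < 1$ we have $\alpha_k = \br{g_k/(2L_k W_k)}^{1/(\gamma-1)}$ exactly, and the plan is to substitute this value into $-\alpha_k g_k + \alpha_k^\gamma L_k W_k$ and simplify. Writing $\beta := g_k/(2L_k W_k)$ so that $\alpha_k = \beta^{1/(\gamma-1)}$ and $L_k W_k = g_k/(2\beta)$, the penalty term becomes $\alpha_k^\gamma L_k W_k = \beta^{\gamma/(\gamma-1)}\cdot g_k/(2\beta) = \tfrac12 g_k \beta^{1/(\gamma-1)} = \tfrac12 \alpha_k g_k$. Thus $-\alpha_k g_k + \alpha_k^\gamma L_k W_k = -\tfrac12 \alpha_k g_k$, and inserting $\alpha_k = \br{g_k/(2L_k W_k)}^{1/(\gamma-1)}$ together with the exponent identity $1 + 1/(\gamma-1) = \gamma/(\gamma-1)$ reproduces precisely the right-hand side of \eqref{descent_alpha_less_1}.

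I do not expect any step here to be a serious obstacle, since everything reduces to the already-proven descent lemma. The only genuinely substantive point is the observation in the first case that the Frank--Wolfe direction combined with convexity gives $g_k \geq f(x_k) - f^*$, which is exactly what turns the per-step decrease into a geometric contraction of the optimality gap. The manipulation in the second case is routine once the exponents are tracked carefully.
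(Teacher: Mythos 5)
Your proposal is correct and follows essentially the same route as the paper: for $\alpha_k=1$ you combine the step-rule consequence $L_kV(x_k+d_k,x_k)\leq\tfrac12\,g_k$ with the Frank--Wolfe/convexity bound $g_k\geq f(x_k)-f^*$, and for $\alpha_k<1$ you substitute the explicit step value into the descent inequality \eqref{descent_tse}, exactly as the paper does. Your intermediate observation that the penalty term equals $\tfrac12\alpha_k g_k$ is just a cleaner bookkeeping of the same algebra.
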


\begin{proof}
	If $\alpha_k = 1$, then, using \eqref{rel_smooth} and the identity $x_{k+1} = x_k + d_k$, we obtain
	\begin{gather*}
		f(x_{k+1}) \leq f(x_k) + \langle \nabla f(x_k), d_k \rangle + L_k V(x_{k+1}, x_k).
	\end{gather*}
	Therefore,
	\begin{gather*}
		f(x_{k+1}) - f^* \leq f(x_k) - f^* + \frac{1}{2} \langle \nabla f(x_k), d_k \rangle \leq \frac{f(x_k) - f^*}{2},
	\end{gather*}
	here we used the definition \eqref{step_length1}, which implies
	\[
	2L_k V(x_{k+1}, x_k) < - \langle \nabla f(x_k), d_k \rangle,
	\]
	as well as the convexity relation
	\[
	\langle \nabla f(x_k), d_k \rangle \leq \langle \nabla f(x_k), x^* - x_k \rangle \leq f^* - f(x_k).
	\]
	
	If $\alpha_k < 1$, then by \eqref{descent_tse} we have
	\begin{equation*}
		\begin{aligned}
			& f(x_k + \alpha_k d_k) - f(x_k) \leq \\ 
			& \leq - \frac{(- \langle \nabla f(x_k), d_k \rangle)^{\gamma / (\gamma - 1)} }{(2 L_k V(x_k + d_k, x_k))^{1 / (\gamma - 1)}} + \frac{(- \langle \nabla f(x_k), d_k \rangle)^{\gamma / (\gamma - 1)}}{2^{\gamma / (\gamma - 1)} (L_k V(x_k + d_k, x_k))^{1 / (\gamma - 1)}} = \\ 
			& = - \frac{1}{2} \frac{(-\langle \nabla f(x_k), d_k \rangle)^{\gamma / (\gamma - 1)}}{(2L_k V(x_k + d_k, x_k))^{1 / (\gamma - 1)}}.
		\end{aligned}
	\end{equation*}
\end{proof}

We are ready to proof the main estimation of Algorithm \ref{alg:adapt_fw_rel}.

\begin{theorem}
	Let $V(x,y) \leq \frac{R^2}{2}$ for all $x,y \in Q$ and $R > 0$; then with $1 < \gamma \leq 2$ for the Algorithm \ref{alg:adapt_fw_rel} the following inequality holds
	\begin{equation}\label{main_estimation}
		f(x_k) - f^* \leq  \left( \frac{2}{k+2} \right)^{\gamma - 1} \max_{j \in \overline{0,k-1}} L_j R^2 \hspace{1cm} \forall k \geq 1.
	\end{equation}
\end{theorem}

\begin{proof}
	The proof is done by induction
	
	1) Basis $k=1$.
	
	Using $\alpha_0 = 1$, we obtain:
	\begin{equation*}
		\begin{aligned}
			& f(x_1) - f^* \leq \\
			& \leq f(x_0) - f^* + \langle \nabla f(x_0), x_1 - x_0 \rangle + L_0 V(x_1, x_0) \leq \\
			& \leq f(x_0) - f^* + \langle \nabla f(x_0), x^* - x_0 \rangle + L_0 V(x_1, x_0) \leq \\
			& \leq L_0 V(x_1, x_0) \leq \frac{L_0 R^2}{2} \leq \frac{2L_0 R^2}{3} \leq \br{\frac{2}{1 + 2}}^{\gamma - 1} L_0 R^2.
		\end{aligned}
	\end{equation*}
	
	The third inequality follows by $f(x_0) - f^* + \langle \nabla f(x_0), x^* - x_0\rangle \leq 0$, and the last by $0 < \gamma - 1 \leq 1$.
	
	Using $\alpha_0 < 1$, we obtain
	
	\begin{equation*}
		\begin{aligned}
			& f(x_0) - f^* \leq \langle \nabla f(x_0), x_0 - x^* \rangle = - \langle \nabla f(x_0), x^* - x_0 \rangle \leq - \langle \nabla f(x_0), d_0 \rangle \leq \\ 
			& \leq 2 L_0  V(x_0 + d_0, x_0) \leq L_0 R^2  = \br{\frac{2}{0 + 2}}^{\gamma - 1} L_0 R^2.
		\end{aligned}
	\end{equation*}
	
	Next, by \eqref{descent_alpha_less_1} we have
	
	\begin{equation*}
		\begin{aligned}
			f(x_1) - f^* \leq (f(x_0) - f^*) \left( 1 - \frac{1}{2} \left( \frac{f(x_0) - f^*}{2 L_0 V(x_0 + d_0, x_0)} \right)^{1 / (\gamma - 1)} \right),
		\end{aligned}
	\end{equation*}
	
	The last equation implies that
	\begin{equation*}
		\begin{aligned}
			f(x_1) - f^* \leq (f(x_0) - f^*) \left( 1 - \left( \frac{f(x_0) - f^*}{2^{\gamma - 1} L_0 R^2} \right) ^{1 / (\gamma - 1)} \right).
		\end{aligned}
	\end{equation*}
	
	So if $\frac{f(x_0) - f^*}{2^{\gamma - 1} L_0 R^2} > \br{\frac{1}{3}}^{\gamma - 1}$, then
	\begin{equation*}
		\begin{aligned}
			& 1 - \br{\frac{f(x_0) - f^*}{2^{\gamma - 1} L_0 R^2}}^{\frac{1}{\gamma - 1}} < 1 - \frac{1}{3} = \frac{2}{3} \\
			\text{and} \\
			& f(x_1) - f^* \leq \frac{2}{3} (f(x_0) - f^*) \leq \frac{2}{3} L_0 R^2 \leq \left(\frac{2}{3}\right)^{\gamma - 1} L_0 R^2 \ \ \text{with} \ 0 < \gamma - 1 \leq 1.
		\end{aligned}
	\end{equation*}
	
	If $\frac{f(x_0) - f^*}{2^{\gamma - 1} L_0 R^2} \leq \br{\frac{1}{3}}^{\gamma - 1}$, then
	\begin{equation*}
		\begin{aligned}
			f(x_1) - f^* \leq f(x_0) - f^* \leq \br{\frac{2}{3}}^{\gamma - 1} L_0 R^2.
		\end{aligned}
	\end{equation*}
	
	The basis is proven.
	
	2) Inductive step.
	
	Suppose that $f(x_k) - f^* \leq \br{\frac{2}{k+2}}^{\gamma - 1} L_{k-1} R^2$. Let us prove the equation:
	\begin{equation*}
		\begin{aligned}
			f(x_{k+1}) - f^* \leq \br{\frac{2}{k+3}}^{\gamma - 1} \max_{j \in \overline{0,k}} L_j R^2.
		\end{aligned}
	\end{equation*}
	
	Since $\alpha = 1$, using \eqref{descent_alpha_eq_1}
	\begin{equation*}
		\begin{aligned}
			& f(x_{k+1}) - f^* \leq \frac{f(x_k) - f^*}{2} < \left( \frac{k+2}{k+3} \right)^{\gamma - 1}(f(x_k) - f^*) \leq \\ 
			& \leq \left(\frac{2}{k+2} \right)^{\gamma - 1} \left(\frac{k + 2}{k+3} \right)^{\gamma - 1} L_{k-1} R^2 \leq \left(\frac{2}{k+3} \right)^{\gamma - 1} \max_{j \in \overline{0,k}} L_j R^2.
		\end{aligned}
	\end{equation*}
	
	Since $\alpha_k < 1$, using \eqref{descent_alpha_less_1} $f(x_{k+1}) - f^* \leq (f(x_k) - f^*) \left(1 - \left( \frac{f(x_k) - f^*}{2^{\gamma - 1} L_k R^2} \right)^{1 / (\gamma - 1)} \right)$. Let us consider two cases.
	
	In the case of $f(x_k) - f^* \leq \br{\frac{2}{k+3}}^{\gamma - 1} L_k R^2 \leq \br{\frac{2}{k+3}}^{\gamma - 1} \max_{j \in \overline{0,k}} L_j R^2$ the target inequality follows from the fact that $f(x_{k+1}) - f^* \leq f(x_k) - f^*$.
	
	In the remaining case $f(x_k) - f^* > \br{\frac{2}{k+3}}^{\gamma - 1} L_k R^2$ with $\frac{f(x_k) - f^*}{2^{\gamma - 1} L_k R^2} > \br{\frac{1}{k+3}}^{\gamma - 1}$ we obtain
	
	\begin{equation*}
		\begin{aligned}
			& f(x_{k+1}) - f^* \leq (f(x_k) - f^*) \left( 1 - \frac{f(x_k) - f^*}{2^{\gamma - 1} L_k R^2} \right) ^{1 / (\gamma - 1)} < \\
			& < (f(x_k) - f^*)\br{1 - \frac{1}{k+3}} = \frac{k + 2}{k + 3} (f(x_k) - f^*) \leq \\
			& \leq\br{\frac{k+2}{k+3}}^{\gamma - 1}\br{\frac{2}{k+2}}^{\gamma - 1} L_k R^2 \leq \br{\frac{2}{k+3}}^{\gamma - 1} \max_{j \in \overline{0,k}} L_j R^2,
		\end{aligned}
	\end{equation*}
	This proves the theorem.
\end{proof}

Additionally, we emphasize that the estimation \eqref{main_estimation} is optimal, since if we substitute $\gamma = 2$ and Euclidean reference function $\frac{1}{2} \|x\|_2^2$ in \eqref{main_estimation}, then we get the expression for the classic Frank-Wolfe algorithm that is optimal estimation.

From the Algorithm \ref{alg:adapt_fw_rel} listing, it can be seen that at each iteration we adjust $L_k$ variable. Let us prove that this does not downgrade the convergence rate. To do this, let us analyze the number of $L_k$ checks at each iteration.

\begin{remark}\label{remark_1}
If $f$ has $L$-Liptshitz continuous gradient and $L_{-1} \leq 2L$, then using \cite{fw_friends} (Lemma 1) we obtain $L_k \leq 2L $.
\end{remark}

\begin{remark}\label{remark_main_estimation_l_smooth}
	If $f$ has $L$-Liptshitz continuous gradient then the estimation \eqref{main_estimation} can be rewritten as $f(x_k) - f^* \leq  \left( \frac{2}{k+2} \right)^{\gamma - 1} L R^2$.
\end{remark}

\begin{remark}\label{remark_l_checks}
	Suppose that $f$ has $L$-Lipthitz continuous gradient, and at step $k$ $i_k$ inequality checks were performed from line~6 of the Algorithm~\ref{alg:adapt_fw_rel}; then, using $L_{-1} \leq 2L$ and \eqref{remark_1} for the Algorithm \ref{alg:adapt_fw_rel}, we have:
\begin{align*}
        &i_0 + i_1 +\ldots + i_N =\\
        &\quad= \left(2 + \log_{2}{\frac{L_1}{L_0}}\right)+ \ldots +\left(2 +\log_{2}{\frac{L_N}{L_{N-1}}}\right)=
        \\&\quad= 2N + \log_{2}{\left(\frac{L_1}{L_0}\frac{L_2}{L_1} \cdots \frac{L_N}{L_{N-1}}\right)}=
        \\&\quad= 2N + \log_{2}{\frac{L_N}{L_0}}   \leq 2N + \log_2{\frac{2L}{L_0}} = O(N).
\end{align*}
Thus, the total number of inequality checks from line 6 of the Algorithm \ref{alg:adapt_fw_rel} after $N$ iterations is $O(N)$.
\end{remark}

\subsection{Convergence of the proposed method at a geometric rate for relatively strongly convex problems.}

In this section, we will show improved convergence estimates for Algorithm \ref{alg:adapt_fw_rel}. Specifically, we will demonstrate linear convergence for the property of relative strong convexity, an analog of the classical strong convexity with the Euclidean norm. Why are the conditions of strong convexity and smoothness alone insufficient for linear convergence? In \cite{polyak_book}, p. 190, an example of a strongly convex function is given that converges slower than linearly, so additional conditions are necessary to guarantee linear convergence. For the standard Frank-Wolfe algorithm, such a condition can be, for instance, the angular property (see, for example, \cite{fw_method_common}). By analogy with \cite{fw_friends}, we will introduce this property using divergence, hence the subsequent reasoning differs from that proposed in \cite{fw_friends}.

\begin{definition}
	A function $f$ is called relatively strongly convex if it is differentiable on $Q$ and there exists $\mu > 0$ such that the following condition holds
	\begin{equation}\label{rel_strong_conv}
		f (x) \geq f (y) + \langle \nabla f (y), x - y\rangle + \mu V (x, y)
	\end{equation}
	for all $x, y \in Q$.
\end{definition}

Next, we will introduce the angular condition adapted to the divergence.
\begin{definition}
	The angle condition is defined as follows
	\begin{equation} \label{angle_cond_def}
		\begin{aligned}
			\frac{ - \nabla f(x_k)^\top d_k}{V(s_k, x_k)} \geq \frac{\tau}{V(x^*, x_k)} (-\nabla f(x_k))^\top(x^* - x_k)
		\end{aligned}
	\end{equation}
	for fixed $\tau > 0$ and $x^* \in \text{argmin}_{x \in Q} f(x)$.
\end{definition}

Now, let us formulate a lemma. Note that according to this lemma, we obtain a guaranteed reduction of the residual by a factor of 2 if the step is full on the iteration:

\begin{lemma} \label{strong_conv_estimation}
	Suppose $f$ satisfies inequalities \eqref{rel_strong_conv} and \eqref{angle_cond_def}, then for Algorithm \ref{alg:adapt_fw_rel} the following holds:
	\begin{equation*}
		\begin{aligned}
			& f\left(x_k\right)-f^* \leqslant\left(f\left(x_0\right)-f^*\right) \prod_{i=1}^k \varphi_i \\
			& \varphi_i= \begin{cases}\frac{1}{2}, & \text { if } \alpha_i=1 \\
				1 - \frac{\tau}{2} \frac{\gamma ^{\gamma / (\gamma + 1)} }{\gamma + 1} \left( \frac{\mu}{2 L_i} \right) ^{1 / (\gamma-1)}, & \text { if } \alpha_i<1\end{cases} \\
			&
		\end{aligned}
	\end{equation*}
	where $\tau > 0$.
\end{lemma}

\begin{proof}
	Using \eqref{rel_strong_conv} we get
	\begin{equation*}
		\begin{aligned}
			& f^* - f(x_k) \geq \nabla f(x_k)^\top(x^* - x_k) + \mu V(x^*, x_k) \geq \\ 
			& \geq \min_\alpha \{ \alpha \nabla f(x_l)^\top (x^* - x_k) + \alpha^\gamma \mu V(x^*, x_k)\}
		\end{aligned}
	\end{equation*}
	
	Let us find $\alpha$ such that
	\begin{equation} \label{strong_conv_with_alpha}
		\begin{aligned}
			\nabla f(x_k)^\top(x^* - x_k) + \gamma \alpha^{\gamma - 1} \mu V(x^*, x_k) = 0
		\end{aligned}
	\end{equation}
	Therefore, we get
	$$\alpha^{\gamma - 1} = \frac{-\nabla f(x_k)^\top (x^* - x_k)}{\gamma \mu V(x^*, x_k)}$$
	
	Substituting this in \eqref{strong_conv_with_alpha}, we obtain
	
	\begin{equation} \label{precision_strong_conv}
		\begin{aligned}
			& f^* - f(x_k) \geq \\ 
			& \geq \left( - \frac{\nabla f(x_k)^\top (x^* - x_k)}{\gamma \mu V(x^*, x_k)} \right)^{1 / (\gamma - 1)} \nabla f(x_k)^\top(x^* - x_k) + \\ 
			& + \left( - \frac{\nabla f(x_k)^\top (x^* - x_k)}{\gamma \mu V(x^*, x_k)} \right)^{\gamma / (\gamma - 1)} \mu V(x^*, x_k) \geq \\
			& \geq \frac{(\gamma + 1)}{\gamma^{\gamma / (\gamma - 1)}} \frac{(- \nabla f(x_k)^\top(x^* - x_k)^{\gamma / \gamma - 1}}{(\mu V(x^*, x_k))^{1 / \gamma - 1}}
		\end{aligned}
	\end{equation}
	
	Using $h_k = f(x_k) - f^*$, we get by \eqref{descent_alpha_less_1}
	\begin{equation*}
		\begin{aligned}
			& h_{k+1} \leq h_k - \frac{1}{2} \frac{(- \nabla f(x_k)^\top d_k)^{\gamma / (\gamma - 1)}}{(2 L_k V(s_k, x_k))^{1 / (\gamma - 1)}} \leq \\ 
			& \leq h_k - \left( \frac{\tau}{2} \frac{1}{2 L_k}\right)^{1 / \gamma - 1} \frac{(- \nabla f(x_k)^\top (x^* - x_k))^{\gamma / (\gamma - 1)}}{V(x^*, x_k)^{1 / (\gamma - 1)}} \leq \\
			& \leq h_k \left( 1 - \frac{\tau}{2} \left( \frac{\mu}{2L_k}\right)^{1/(\gamma - 1)} \frac{\gamma^{\gamma / (\gamma - 1)}}{(\gamma + 1)} \right)
		\end{aligned}
	\end{equation*}
	In the first inequality we used the step size definition \eqref{step_length1}, in the second we used \eqref{angle_cond_def}, and in the third \eqref{precision_strong_conv}.
	
	If the step on iteration $k$ is full: $\alpha_k = 1$, then from \eqref{descent_alpha_eq_1} it follows that $f(x_{k+1}) - f^* \leq  \frac{f(x_k) - f^*}{2}$.
\end{proof}

Next, we find $\tau$ in Lemma \ref{strong_conv_estimation}, namely we detail the result on the method convergence to an $\epsilon$-optimal solution. We formulate this result as a theorem.
\begin{theorem}
	Suppose $f$ satisfies all the assumptions of Lemma \ref{strong_conv_estimation}, and let $x^*$ be an interior point of $Q$ in the sense of Euclidean norm. Then for Algorithm \ref{alg:adapt_fw_rel}, either the estimate $V(x^*, x_k) \leq \epsilon$ or the inequality holds:
	\begin{equation}\label{strong_conv_estimation_accurate}
		\begin{aligned}
			& f(x_{k+1}) - f^* \leq \\
			& \leq \left(1 - \frac{\delta}{D} \frac{\epsilon}{D_V} \frac{1}{2} \left( \frac{\mu}{2 \max_{j \in \overline{0,k}} L_j} \right) ^{1 / (\gamma-1)} \frac{\gamma ^{\gamma / (\gamma + 1)} }{\gamma + 1} \right)^k (f(x_0) - f^*),
		\end{aligned}
	\end{equation}
	where $\delta = \text{dist}(x^*, \partial Q) = \inf_{y \in \partial Q} \| x^* - y \|_2$, $\epsilon$ - required accuracy, $D = \max_{x,y} \| x - y\|_2$, $D_v = \max_{x,y} V(x, y)$, where $x, y \in Q$
\end{theorem}

\begin{proof}
	Let us denote $g = -\nabla f(x_k), \ \hat{g} = \frac{g}{\| g \|_2}$. Taking into account $x^* \in \text{Int(Q)}$, we obtain: $x^* + \delta \hat{g} \in Q$. Then:
	\begin{equation}\label{g_norm_ineq}
		\begin{aligned}
			g^{\top} d_k \geq g^{\top}\left(\left(x^*+\delta \widehat{g}\right)-x_k\right)=\delta g^{\top} \widehat{g}+g^{\top}\left(x^*-x_k\right) \geq \delta\|g\|_2+f(x_k)-f^* \geq \delta\|g\|_2
		\end{aligned}
	\end{equation}
	where we used $x^* + \delta \hat{g} \in Q$ in the first inequality and convexity of $f$ in the second.
	
	Next, if $V(x^*, x_k) > \epsilon$, then
	\begin{equation*}
		\begin{aligned}
			& g^{\top} \frac{d_k}{V(s, x_k)} \geqslant g^{\top} \frac{d_k}{D_v} \geqslant \frac{\delta\|g\|}{D_v} \geqslant \\
			& \geqslant \frac{\delta}{D_v} \cdot g^{\top}\left(\frac{x^*-x_k}{\left\|x^*-x_k\right\|}\right) \cdot \frac{V\left(x^*, x_k\right)}{V\left(x^*, x_k\right)}= \\
			& =\frac{\delta}{D_v} \cdot \frac{V\left(x^*, x_k\right)}{\left\|x^*-x_k\right\|} \cdot g^{\top}\left(\frac{x^*-x_k}{V\left(x^*, x_k\right)}\right) \geqslant \\
			& \geqslant \frac{\delta}{D} \cdot \frac{V\left(x^*, x_k\right)}{D_v} \cdot g^{\top}\left(\frac{x^*-x_k}{V\left(x^*, x_k\right)}\right) \geqslant \\
			& \geqslant \frac{\delta}{D} \cdot \frac{\varepsilon}{D_v} \cdot g^{\top}\left(\frac{x^*-x_k}{V\left(x^*, x_k\right)}\right)
		\end{aligned}
	\end{equation*}
	where we used \eqref{g_norm_ineq} in the second inequality and $V(x^*, x_k) > \epsilon$ in the last.
	
	In the remaining case $V(x^*, x_k) \leq \epsilon$, so the desired accuracy can be considered achieved.
	
	As we can see, the obtained inequality is the angular condition \eqref{angle_cond_def} with $\tau = \frac{\delta}{D} \cdot \frac{\varepsilon}{D_v}$. Combining the fact $\frac{\delta}{D} = \frac{\text{dist}(x^*, \partial C)}{D} \leq \frac{1}{2}$ and same reasoning as in Lemma \ref{strong_conv_estimation}, we get the initial inequality.
	
\end{proof}

\section{Experiments}
To demonstrate the performance of the proposed Algorithm~\ref{alg:adapt_fw_rel}, we conducted a series of numerical experiments. All computations were implemented in Python~3.11.4.

\subsection{D-optimal experiment design}\label{d_opt_subsec}

Consider the D-optimal experiment design problem
\begin{equation}\label{d_opt_design_rs}
	\begin{aligned}
		\min_x \quad & f(x):=  \log\left(\det\left(\sum_{i=1}^n x_i V_i V_i^T\right)\right), \\
		\text{s.t.} \quad & \sum_{i=1}^n x_i = 1, \quad x_i\geq 0, \quad i=1,\ldots,n. 
	\end{aligned}
\end{equation}
Here, $V_i\in \mathbb{R}^m$ for $i=1,\ldots,n$. It was shown in~\cite{lu2018relatively} that the objective function in the D-optimal experiment design problem is relatively smooth with respect to the reference function $h(x) = - \sum_{j=1}^{n} \ln(x_j)$, with relative smoothness constant $L=1$.

As benchmark datasets, we use the \emph{Housing} dataset containing 506 samples and 13 features, as well as the \emph{Bodyfat} dataset of dimension $252 \times 14$.

We compare the performance of Algorithm~\ref{alg:adapt_fw_rel} with the classical Frank--Wolfe method equipped with the short-step strategy and the Euclidean reference function:
\[
\alpha_k =
\min\left\{
1,
\frac{- \nabla f(x_k)^\top (s_k-x_k)}
{L\|s_k-x_k\|^2}
\right\},
\]
where $s_k$ denotes the solution of the linear minimization oracle (LMO). This comparison illustrates the advantage of the proposed method when minimizing relatively smooth objective functions. In both algorithms, a line-search procedure is employed to adaptively estimate the parameter $L$.

The experimental results are presented in Figure~\ref{exp:d_opt_design_rs:housing} for the Housing dataset and in Figure~\ref{exp:d_opt_design_rs:bodyfat} for the Bodyfat dataset. One can observe that Algorithm~\ref{alg:adapt_fw_rel} significantly outperforms the classical Frank--Wolfe method. The values of the parameter $L$, shown on the right-hand side of the figures, indicate that the proposed method provides substantially more accurate local approximations of the objective function. This suggests that the proposed geometry is more suitable for this class of problems.

\begin{figure}[ht]
	\includegraphics[width=16cm]{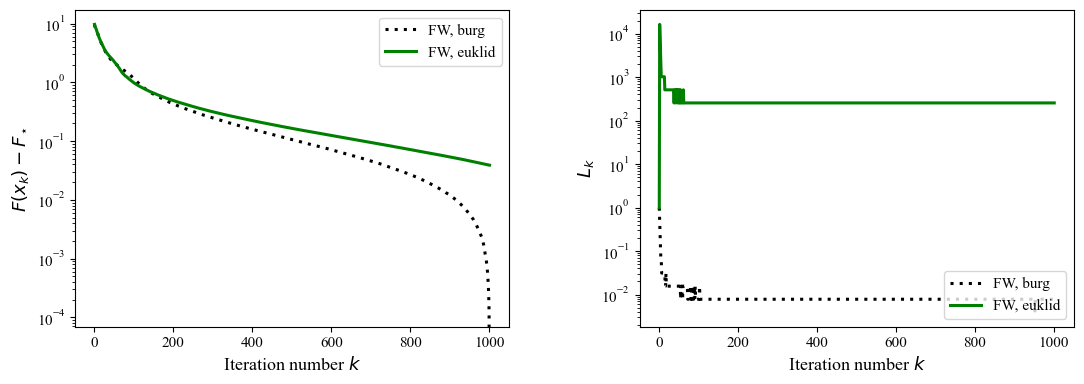}
	\centering
	\caption{Comparison of Algorithm~\ref{alg:adapt_fw_rel} (FW, Burg) and the classical Frank--Wolfe method with short-step strategy (FW, Euclid) for problem~\eqref{d_opt_design_rs} on the Housing dataset.}
	\label{exp:d_opt_design_rs:housing}
\end{figure}

\begin{figure}[ht]
	\includegraphics[width=16cm]{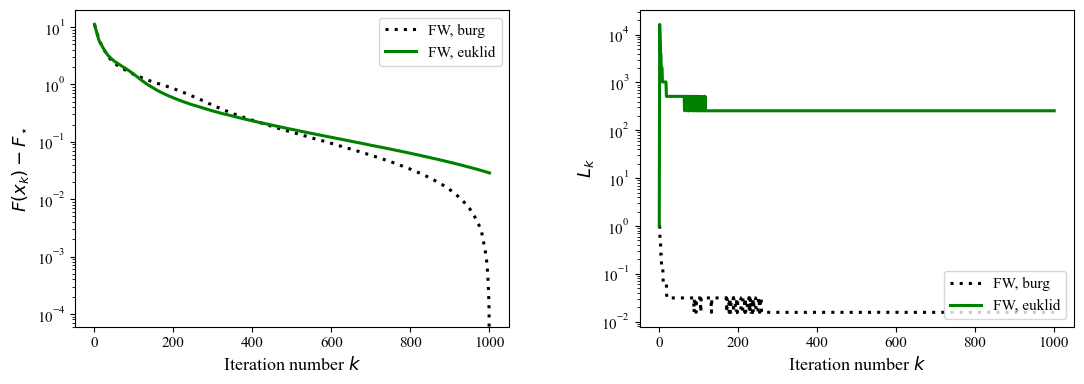}
	\centering
	\caption{Comparison of Algorithm~\ref{alg:adapt_fw_rel} (FW, Burg) and the classical Frank--Wolfe method with short-step strategy (FW, Euclid) for problem~\eqref{d_opt_design_rs} on the Bodyfat dataset.}
	\label{exp:d_opt_design_rs:bodyfat}
\end{figure}

\subsection{Poisson linear inverse problem}

Consider the well-known Poisson linear inverse problem. Let $A\in\mathbb{R}^{m\times n}$ be an observation matrix and let $b\in\mathbb{R}_{++}^m$ be a noisy observation vector. The goal is to recover a signal $x$ such that $Ax \approx b$. The optimization problem is formulated as
\begin{equation}\label{exp_pois_kl}
	\min_x f(x):= D_{\mathrm{KL}}(b, Ax),
\end{equation}
where
$D_{\mathrm{KL}}(x, y) = \sum_{i=1}^n \left(x^{(i)} \log\left(\frac{x^{(i)}}{y^{(i)}}\right) - x^{(i)} + y^{(i)} \right)$
denotes the Kullback--Leibler divergence. It was shown in~\cite{rel_smooth} that this objective function is relatively smooth with respect to the reference function $h(x) = - \sum_{i=1}^{n} \ln(x^{(i)})$, with relative smoothness constant $L=\|b\|_1$.

As the feasible set, we consider the intersection of the positive orthant with an $\ell_2$-ball. The noise level is set to $0.001$. The following problem dimensions were used:$n=200,\quad m=100$ and $n=500,\quad m=100$. The initial point was chosen near the center of the feasible region.

To improve the statistical reliability of the results, the residuals at each iteration were averaged over 20 independent runs.

\begin{figure}[ht]
	\includegraphics[width=16cm]{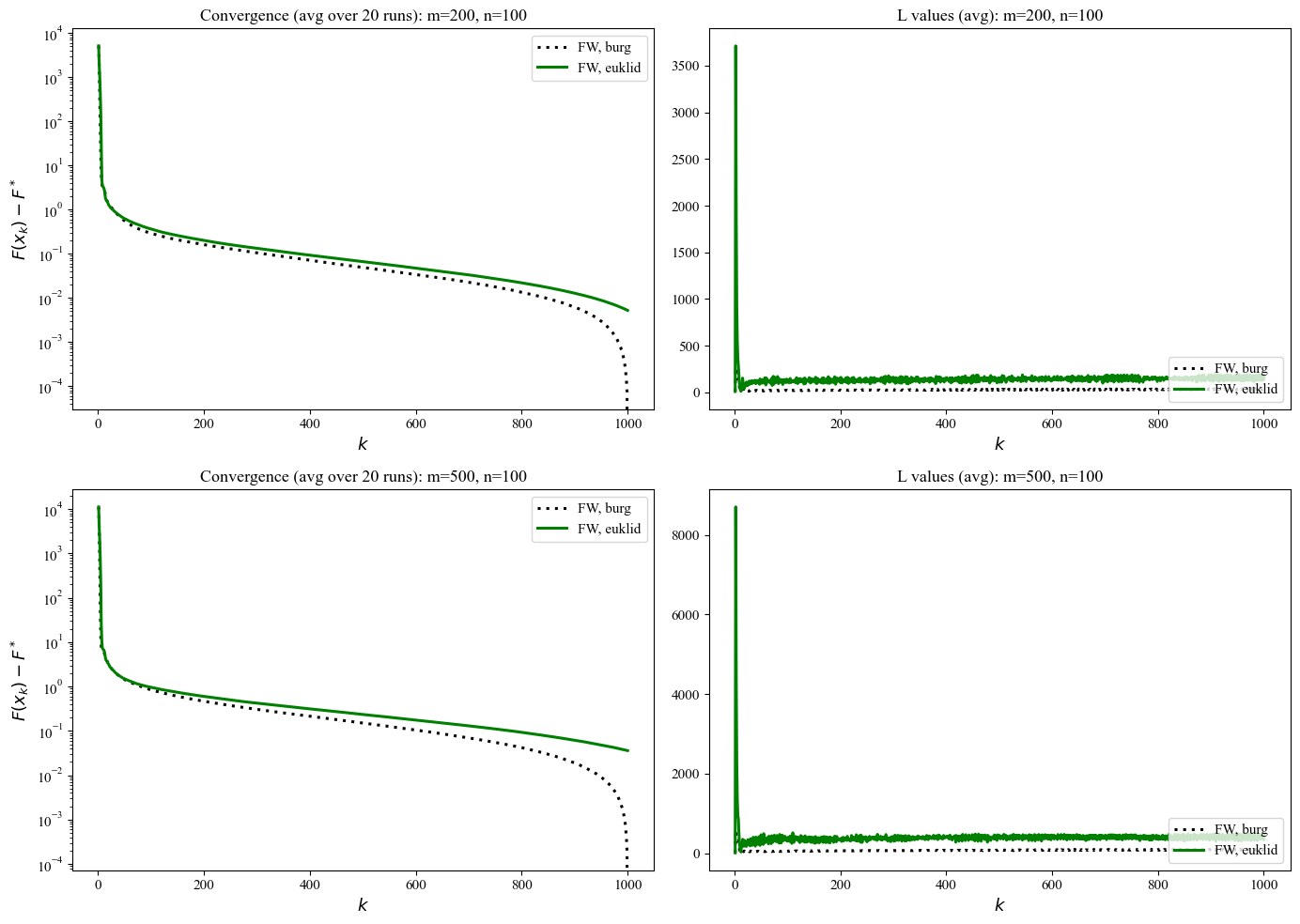}
	\centering
	\caption{Comparison of Algorithm~\ref{alg:adapt_fw_rel} (FW, Burg) and the classical Frank--Wolfe method with short-step strategy (FW, Euclid) for the Poisson linear inverse problem.}
	\label{kl_poisson}
\end{figure}

In this experiment, we use the same algorithms as in Section~\ref{d_opt_subsec}. Figure~\ref{kl_poisson} demonstrates that Algorithm~\ref{alg:adapt_fw_rel} consistently outperforms the classical Frank--Wolfe method. Moreover, the estimated values of the parameter $L$ remain significantly smaller throughout the optimization process, indicating that the Bregman-based approximation employed by the proposed method provides a more accurate local model of the objective function.

\section{Conclusion}
In our work, we proposed an adaptive Frank-Wolfe algorithm for relatively smooth problems \eqref{rel_smooth} and proved its convergence rate estimate. In numerous experiments, we demonstrated the conditions under which the proposed algorithm is applicable, using examples from the Poisson linear inverse problem and SVM tasks. The main advantage of this algorithm over accelerated and proximal algorithms is that it does not require the computation of projections or auxiliary subproblems such as argmin, which can be very computationally intensive for certain reference functions, especially in high dimensions. Despite this simplicity in implementation, it offers theoretical guarantees that are nearly as strong. Ultimately, we have a straightforward algorithm with a dimension-independent and norm-independent convergence rate.

\newpage
\printbibliography

\end{document}